\documentclass[11pt]{article}
\usepackage[margin=1in]{geometry}
\usepackage{amsmath,amssymb,amsthm}
\usepackage{hyperref}
\usepackage{enumitem}

\newtheorem{theorem}{Theorem}
\newtheorem{lemma}{Lemma}
\theoremstyle{definition}
\newtheorem{remark}{Remark}

\newcommand{\bx}{\boxtimes}
\newcommand{\chid}[1]{\chi^{#1}}
\newcommand{\Cstar}{C^{\star}}

\title{An improved lower bound for the blowup\\ defective--chromatic separation constant}
\author{Guillaume Lecomte}
\date{\today}

\begin{document}
\maketitle

\begin{abstract}
For a graph $G$ and an integer $d\ge 0$, let $\chid{d}(G)$ denote the $d$-defective chromatic
number, and let $G\bx K_{d+1}$ be the $(d{+}1)$-fold clique blowup of $G$. Norin and
Steiner~\cite{NS} disproved the conjecture $\chi(G)=\chid{d}(G\bx K_{d+1})$ of Guo, Kang and
Zwaneveld~\cite{GKZ} by exhibiting,
for infinitely many $d$, graphs with $\chi(G)\ge \tfrac{30}{29}\,\chid{d}(G\bx K_{d+1})$, and they
proved the universal upper bound $\chi(G)\le 2\,\chid{d}(G\bx K_{d+1})$. Writing
$C_d=\sup_G \chi(G)/\chid{d}(G\bx K_{d+1})$ and $\Cstar=\sup_{d\ge0} C_d$, their results give
$\Cstar\in[30/29,\,2]$. We improve the lower bound: we exhibit an explicit $40$-vertex graph $W$
with $\chi(W)=11$ and $\chid{2}(W\bx K_3)=10$, whence
\[
  \Cstar \;\ge\; C_2 \;\ge\; \frac{\chi(W)}{\chid{2}(W\bx K_3)} \;=\; \frac{11}{10}\;>\;\frac{30}{29},
\]
already at the smallest defect for which such a separation is possible, namely $d=2$. All
parameters are established by the proofs below; the only computer-assisted input---the
non-list-colourability of a certain $30$-vertex, $10$-colour list instance $(B,L)$---is certified by
an independently checkable \textsc{drat} refutation.
\end{abstract}

\section{Introduction}

Given a graph $G$ and $d\ge 0$, the \emph{$d$-defective chromatic number} $\chid{d}(G)$ is the least
number of parts in a partition of $V(G)$ into sets each inducing a subgraph of maximum degree at most
$d$. The \emph{$(d{+}1)$-fold clique blowup} $G\bx K_{d+1}$ is the strong product of $G$ with the
complete graph $K_{d+1}$. Guo, Kang and Zwaneveld~\cite{GKZ} conjectured
$\chi(G)=\chid{d}(G\bx K_{d+1})$ for all $G$ and $d$; Norin and Steiner~\cite{NS} disproved it, and
proved that the separation appears for every $d\ge 2$ while the conjecture holds for $d\le 1$.
Writing
\[
   C_d \;=\; \sup_G \frac{\chi(G)}{\chid{d}(G\bx K_{d+1})}, \qquad \Cstar=\sup_{d\ge 0} C_d,
\]
they showed $\Cstar\ge 30/29$ and $\Cstar\le 2$.

The lifting construction we use is the one of Norin and Steiner~\cite{NS}. Our new ingredient is an
explicit non-$L$-colourable list instance with lists of size three, colour-degree at most two, and a
palette of only \emph{ten} colours; substituting it into that lift reduces the clique part from size
$29$ to size $10$ and yields the ratio $11/10$ at $d=2$. (Throughout, \emph{defective} refers to the
maximum-monochromatic-degree parameter that Guo, Kang and Zwaneveld~\cite{GKZ} call \emph{improper};
the two coincide here.)

\begin{theorem}\label{thm:main}
There is an explicit graph $W$ on $40$ vertices with $\chi(W)=11$ and $\chid{2}(W\bx K_3)=10$.
Consequently $\Cstar\ge C_2\ge \tfrac{11}{10}$.
\end{theorem}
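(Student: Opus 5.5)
The plan is to realise $W$ as the Norin--Steiner lift of the list instance $(B,L)$ mentioned in the abstract. Let $B$ be the $30$-vertex graph with lists $L(b)\subseteq[10]=\{1,\dots,10\}$, $|L(b)|=3$. We use \emph{colour-degree at most two} in the following sense: for every $b\in V(B)$ and $c\in L(b)$, at most two neighbours $u$ of $b$ satisfy $c\in L(u)$. We take from the certified computation that $B$ is not $L$-colourable. Define $W$ by adding a clique $K=\{k_1,\dots,k_{10}\}$ to $B$, and join $b\in V(B)$ to $k_c$ exactly when $c\notin L(b)$. Then $|V(W)|=40$, and the four inequalities follow in turn.

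\emph{$\chi(W)\ge 11$.} Suppose $W$ has a proper $10$-colouring. The clique $K$ uses all ten colours, so after renaming we may assume $k_c$ receives colour $c$. Each $b\in V(B)$ is adjacent to $k_c$ for every $c\notin L(b)$, so its colour lies in $L(b)$. The restriction to $B$ is therefore a proper $L$-colouring, which contradicts the non-$L$-colourability of $(B,L)$.

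\emph{$\chi(W)\le 11$.} I would exhibit an explicit $11$-colouring. The intended route is to check (by hand or with a short listed certificate) that $B-v$ is $L$-colourable for some vertex $v$. We then colour $k_c$ with $c$, colour $B-v$ by that $L$-colouring, and give $v$ a new eleventh colour. This is proper because each $b$ is coloured from $L(b)$, so it never meets $k_c$ with $c=\varphi(b)$. This step needs only a single explicit witness, not a refutation.

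\emph{$\chid{2}(W\bx K_3)\le 10$.} Write the copies of $w$ as $(w,1),(w,2),(w,3)$. Colour all three copies of $k_c$ with $c$. Give the three copies of $b\in V(B)$ the three distinct colours of $L(b)$.

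We bound monochromatic degrees as follows.
\begin{itemize}
\item A copy of $k_c$ has exactly two monochromatic neighbours, namely its twins. The other $k$'s carry other colours. Every $b$ adjacent to $k_c$ has $c\notin L(b)$, so it has no copy coloured $c$.
\item A copy of $b$ coloured $c\in L(b)$ has twins of different colours. It is not adjacent to any copy of $k_c$, since $c\in L(b)$. Each neighbour $u\in V(B)$ contributes at most one copy coloured $c$, and only when $c\in L(u)$. By the colour-degree condition there are at most two such $u$.
\end{itemize}
Hence every colour class has maximum degree at most $2$.

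\emph{$\chid{2}(W\bx K_3)\ge 10$.} $K\bx K_3$ is a clique on $30$ vertices. A colour class of maximum degree at most $2$ meets a clique in at most $3$ vertices, so at least $10$ colours are needed.

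The ratio statement then follows from the definitions of $C_2$ and $\Cstar$. The main obstacle is not in this argument but in its input, namely the non-$L$-colourability of $(B,L)$, which is delegated to the \textsc{drat} certificate. Besides that, the construction of $B$ must be checked to satisfy the list-size and colour-degree conditions, and a witness for $\chi(W)\le 11$ must be produced. The remaining steps are routine verifications.
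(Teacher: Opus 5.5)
Your proposal is correct and follows essentially the same route as the paper: the lift of $(B,L)$ by a $K_{10}$ of colour-vertices, non-$L$-colourability for $\chi(W)\ge 11$, an $L$-colouring of a punctured instance plus a fresh eleventh colour for $\chi(W)\le 11$, the three list colours on the three copies of each base vertex for the defective upper bound, and the clique $K_{10}\boxtimes K_3=K_{30}$ for the lower bound. The witness you defer is supplied in the paper by taking $v=0$ with the explicit colouring $\psi$ of $B-0$ given in the appendix, and your vertex-local form of colour-degree $\le 2$ is equivalent to the paper's property~(i).
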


Since $11/10=1.1>1.0345\dots=30/29$, this improves the known lower bound.

\section{The construction}

\paragraph{A base instance.}
Let $B$ be the graph on $\{0,\dots,29\}$ with the $72$ edges listed in Appendix~\ref{app:data},
equipped with the size-$3$ list assignment $L\colon V(B)\to\binom{\{0,\dots,9\}}{3}$ given there.
The pair $(B,L)$ has two properties, both established computationally
(Section~\ref{sec:certs}):
\begin{enumerate}[label=(\roman*)]
\item \emph{Colour-degree $\le 2$}: for every colour $c\in\{0,\dots,9\}$, the set
$\{v: c\in L(v)\}$ induces in $B$ a subgraph of maximum degree at most $2$;
\item \emph{Non-list-colourability}: there is no $\psi\colon V(B)\to\{0,\dots,9\}$ with
$\psi(v)\in L(v)$ for all $v$ and $\psi(u)\ne\psi(v)$ for all $uv\in E(B)$.
\end{enumerate}

\begin{remark}[Provenance of $(B,L)$, not used in the proof]
$(B,L)$ was obtained from an explicit Bohman--Holzman--type list instance $(B,L_0)$ on $29$
colours~\cite{BH} (of the kind underlying the Norin--Steiner $d=2$ construction) by fusing colours
onto ten labels via a map $f$. The files \texttt{BL\_original\_lists.txt} and \texttt{fusion\_map.txt}
are deposited, and satisfy $L=f\circ L_0$; the verifier of Section~\ref{sec:certs} checks this
reconstruction. This provenance plays no role in the theorem, which uses only properties
(i)--(ii) of $(B,L)$, verified directly. For context, non-list-colourability descends under any such
fusion: if $f$ is injective on each $L_0(v)$ (so $L(v)=f(L_0(v))$ has size three) and $B$ had a proper
colouring $\psi$ with $\psi(v)\in L(v)$, then injectivity selects the unique
$\widetilde\psi(v)\in L_0(v)$ with $f(\widetilde\psi(v))=\psi(v)$, and for an edge $uv$,
$f(\widetilde\psi(u))=\psi(u)\ne\psi(v)=f(\widetilde\psi(v))$ forces
$\widetilde\psi(u)\ne\widetilde\psi(v)$, so $\widetilde\psi$ would properly list-colour $(B,L_0)$.
\end{remark}

\paragraph{The lift.}
Define $W$ on $V(B)\cup\{z_0,\dots,z_9\}$ with
\[
  E(W)=E(B)\ \cup\ \{z_c z_{c'} : c\ne c'\}\ \cup\ \{v z_c : v\in V(B),\ c\notin L(v)\}.
\]
Thus $\{z_0,\dots,z_9\}$ is a clique $K_{10}$, and $v\in V(B)$ is joined to exactly the
colour-vertices outside its list. Then $|V(W)|=40$ and
$|E(W)|=72+\binom{10}{2}+30(10-3)=72+45+210=327$.

\section{Verification of the two parameters}

\begin{lemma}\label{lem:chi}
$\chi(W)=11$.
\end{lemma}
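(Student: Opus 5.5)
We prove the two inequalities $\chi(W)\le 11$ and $\chi(W)\ge 11$ separately. The lower bound carries the content; the upper bound is a direct construction.

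\emph{Upper bound.} Give the clique vertices $z_0,\dots,z_9$ the distinct colours $0,\dots,9$, and give every vertex of $B$ the eleventh colour $10$. For this to be proper, $V(B)$ must be independent in this colouring, but $B$ has edges, so this naive assignment fails. We therefore use property (i) instead. We need a proper colouring of $B$ with a palette that is compatible with the clique, and the cheapest route is to exploit the list structure. Colour $z_c$ with colour $c$. A vertex $v\in V(B)$ may receive colour $c\in\{0,\dots,9\}$ only if $c\in L(v)$, since $v$ is adjacent to $z_c$ for every $c\notin L(v)$. By (ii), no $L$-colouring of $B$ exists, so at least one vertex must take the new colour $10$. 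It suffices to exhibit an explicit colouring in which a set $S\subseteq V(B)$ that is independent in $B$ receives colour $10$, and $B-S$ is properly $L$-coloured. Such an $S$ is found by deleting a single vertex, or a small independent set, from $B$ and list-colouring the remainder. I would certify this by listing the colouring in the appendix, where it is checked edge by edge across all $327$ edges.

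\emph{Lower bound.} Suppose $\varphi$ is a proper $10$-colouring of $W$. Since $\{z_0,\dots,z_9\}$ is a $K_{10}$, its vertices use all ten colours. After renaming colours we may assume $\varphi(z_c)=c$. Take any $v\in V(B)$. It is adjacent to $z_c$ for every $c\notin L(v)$, so $\varphi(v)\ne c$ for all such $c$, and hence $\varphi(v)\in L(v)$. Moreover $\varphi$ is proper on $E(B)\subseteq E(W)$. So $\varphi|_{V(B)}$ is a proper $L$-colouring of $B$, which contradicts property (ii). Therefore $\chi(W)\ge 11$.

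The main obstacle is that the lower bound rests entirely on (ii), which is the computer-certified input (the \textsc{drat} refutation). The proof here only reduces to it. The upper bound needs a concrete witness; if no single vertex suffices, I would search for a minimal independent set $S$ whose removal makes $(B-S,L)$ colourable, and record that colouring.
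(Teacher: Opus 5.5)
Your lower bound is exactly the paper's argument: you normalise to $\varphi(z_c)=c$, use the edges $vz_c$ ($c\notin L(v)$) to force $\varphi(v)\in L(v)$, and contradict (ii). That half is complete.

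\textbf{The upper bound is not proved.} You set up the same framework the paper uses: colour $z_c$ by $c$, give an independent set $S\subseteq V(B)$ the fresh colour $10$, and properly $L$-colour $B-S$. Properness then holds because $\psi(v)\in L(v)$ avoids $c(z_j)=j$ for every $j\notin L(v)$, colour $10$ sits only on the independent set $S$, and all other vertices receive colours in $\{0,\dots,9\}$. But you never show that such an $S$ exists. The phrases ``such an $S$ is found by deleting a single vertex, or a small independent set'' and ``if no single vertex suffices, I would search'' are assertions, not arguments.

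Nothing general supplies the existence. In particular, property (i) plays no role here, despite ``we therefore use property (i) instead'': it bounds colour-degrees inside the lists, but says nothing about whether $(B-S,L)$ is colourable for some independent $S$. Whether such an $S$ exists depends on the specific data of $(B,L)$. The entire content of $\chi(W)\le 11$ is a concrete witness, and that is what is missing.

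The paper supplies it: $S=\{0\}$, together with the explicit $L$-colouring $\psi$ of $B-0$ in Appendix~\ref{app:coloring}. You only need to check $\psi(v)\in L(v)$ for $v\ne 0$ and that $\psi$ is proper on the $67$ edges of $B-0$. The case analysis above then gives properness on all of $W$, so there is no need to verify all $327$ edges of $W$ individually. Adding this witness, or any other explicit $S$ and colouring of $B-S$, closes the gap.
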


\begin{proof}
\emph{Lower bound.} As $W\supseteq K_{10}$, $\chi(W)\ge 10$. Suppose $c$ is a proper
$10$-colouring. The clique $\{z_0,\dots,z_9\}$ uses all ten colours; after relabelling the palette
assume $c(z_j)=j$. For $v\in V(B)$ and every $j\notin L(v)$ we have $v z_j\in E(W)$, so
$c(v)\ne j$; hence $c(v)\in L(v)$. As $c$ is proper on $E(B)$, its restriction $c|_{V(B)}$ is a
proper list-colouring of $(B,L)$, contradicting (ii). Thus $\chi(W)\ge 11$.

\emph{Upper bound.} The punctured instance $(B-0,L)$ is list-colourable, as certified by the explicit
map $\psi$ of Appendix~\ref{app:coloring} ($\psi(v)\in L(v)$ for all $v\ne 0$, and $\psi$ proper on
$B-0$). Colour $W$ by $c(z_j)=j$ ($0\le j\le 9$),
$c(v)=\psi(v)$ for $v\in\{1,\dots,29\}$, and $c(0)=10$. This is proper: the $z$-clique is
rainbow; for $v\ne 0$ and $j\notin L(v)$ we have $c(v)=\psi(v)\in L(v)$, so $c(v)\ne j=c(z_j)$;
$\psi$ is proper on $B-0$; and since every vertex other than $0$ receives a colour in
$\{0,\dots,9\}$, the fresh colour $10$ at vertex $0$ creates no monochromatic edge incident with
$0$. Hence $\chi(W)\le 11$.
\end{proof}

\begin{lemma}\label{lem:pi2}
$\chid{2}(W\bx K_3)=10$.
\end{lemma}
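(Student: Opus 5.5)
The lower bound comes from the clique $\{z_0,\dots,z_9\}$; the upper bound comes from an explicit $10$-colouring in which each of the three copies of a base vertex $v$ receives a different colour of its list $L(v)$. I write the vertices of $W\bx K_3$ as pairs $(x,i)$ with $x\in V(W)$ and $i\in\{1,2,3\}$. Two distinct pairs $(x,i),(y,j)$ are adjacent iff $x=y$ or $xy\in E(W)$.

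\emph{Lower bound.} The vertices $(z_c,i)$ with $0\le c\le 9$ and $i\in\{1,2,3\}$ form a clique $K_{30}$ in $W\bx K_3$, because $\{z_0,\dots,z_9\}$ is a clique in $W$.

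In a $2$-defective colouring, a colour class meeting a clique in $s$ vertices gives each of those vertices $s-1$ monochromatic neighbours. Hence each class meets the $K_{30}$ in at most $3$ vertices. Therefore at least $30/3=10$ colours are needed, and $\chid{2}(W\bx K_3)\ge 10$.

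\emph{Upper bound.} For each $v\in V(B)$, write $L(v)=\{a_1,a_2,a_3\}$ and colour $(v,i)$ with $a_i$. Colour all three copies $(z_c,1),(z_c,2),(z_c,3)$ with $c$. This uses the colours $\{0,\dots,9\}$. We count monochromatic neighbours in each case.

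\begin{itemize}
\item \emph{A copy $(z_c,i)$, coloured $c$.} Its monochromatic neighbours are the two other copies of $z_c$. Copies of $z_{c'}$ with $c'\ne c$ have colour $c'\ne c$. A base copy $(v,j)$ coloured $c$ has $c\in L(v)$, so $vz_c\notin E(W)$ and it is not adjacent to $(z_c,i)$. So the monochromatic degree is exactly $2$.
\item \emph{A base copy $(v,i)$, coloured $c\in L(v)$.} The other copies of $v$ receive the other two colours of $L(v)$, which differ from $c$. The only $z$-vertex coloured $c$ is $z_c$, and $vz_c\notin E(W)$ since $c\in L(v)$. Any remaining monochromatic neighbour is a copy $(u,j)$ with $uv\in E(B)$ and colour $c$. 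Such a $u$ satisfies $c\in L(u)$, and exactly one copy of $u$ carries colour $c$. By property~(i), $v$ has at most $2$ neighbours $u$ in $B$ with $c\in L(u)$. So the monochromatic degree is at most $2$.
\end{itemize}

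This gives a $2$-defective $10$-colouring, so $\chid{2}(W\bx K_3)\le 10$.

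There is no real obstacle. The only point needing care is that property~(i) must be applied to the set $\{u: c\in L(u)\}$, which contains $v$ itself. This is exactly what bounds the number of $B$-neighbours of $v$ that can receive colour $c$, since each such $u$ contributes exactly one copy of colour $c$.
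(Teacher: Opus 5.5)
Your proposal is correct and follows essentially the same route as the paper: the lower bound via the clique $K_{10}\bx K_3=K_{30}$, where each $2$-defective class has at most $3$ vertices, and the upper bound via the same explicit colouring (list entries on base copies, colour $j$ on all copies of $z_j$), checked case by case using property~(i) and $vz_c\notin E(W)$ for $c\in L(v)$.
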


\begin{proof}
Write vertices of $W\bx K_3$ as $(x,i)$, $x\in V(W)$, $i\in\{0,1,2\}$. Since the second factor $K_3$
is complete, the strong-product adjacency reduces here to
\[
   (x,i)\sim(y,j)\iff \bigl(x=y\ \text{and}\ i\ne j\bigr)\ \text{or}\ xy\in E(W).
\]
In particular each fibre $\{x\}\times V(K_3)$ is a triangle, and for $xy\in E(W)$ all nine pairs
$(x,i),(y,j)$ are adjacent.

\emph{Upper bound.} For $v\in V(B)$ write $L(v)=\{a<b<c\}$ and set $\varphi(v,0)=a$,
$\varphi(v,1)=b$, $\varphi(v,2)=c$; put $\varphi(z_j,i)=j$. Each colour class has maximum degree
$\le 2$:
\begin{itemize}
\item Base copy $(v,i)$ with $\varphi(v,i)=x=L(v)_i$. No other copy $(v,i')$ has colour $x$
(distinct list entries); no colour-vertex copy contributes a same-coloured neighbour, since the only
colour fibre carrying colour $x$ is the fibre over $z_x$ and $vz_x\notin E(W)$ because $x\in L(v)$;
and for each base neighbour $w\sim_B v$, exactly one copy of $w$ carries $x$ iff $x\in L(w)$ (lists
are $3$-sets), giving $|\{w\in N_B(v):x\in L(w)\}|\le 2$ same-coloured neighbours by property (i).
\item Colour-vertex copy $(z_j,i)$. Its colour-$j$ neighbours are the two copies $(z_j,i')$: no
other $z_{j'}$ carries $j$, and any adjacent base copy $(v,\cdot)$ has $j\notin L(v)$, hence
colour $\ne j$. Degree exactly $2$.
\end{itemize}
So $\varphi$ is a $2$-defective $10$-colouring and $\chid{2}(W\bx K_3)\le 10$.

\emph{Lower bound.} The colour-vertices induce $K_{10}$ in $W$, so $W\bx K_3$ contains
$K_{10}\bx K_3=K_{30}$ as a subgraph. In $K_{30}$ a part of maximum degree $\le 2$ induces a clique
on $\le 3$ vertices, so $\chid{2}(K_{30})\ge\lceil 30/3\rceil=10$; monotonicity of $\chid{2}$
under subgraphs gives $\chid{2}(W\bx K_3)\ge 10$.
\end{proof}

\paragraph{Proof of Theorem~\ref{thm:main}.}
By Lemmas~\ref{lem:chi} and~\ref{lem:pi2},
$\Cstar\ge C_2\ge \chi(W)/\chid{2}(W\bx K_3)=11/10$. \hfill$\square$

\begin{remark}
The construction gives a finite, explicitly verifiable witness attaining the ratio $11/10$ at
defect~$2$.
\end{remark}

\begin{remark}
Determining the exact value of $C_2$ remains open; in particular it is not known whether some graph
$G$ satisfies $\chi(G)>\tfrac{11}{10}\,\chid{2}(G\bx K_3)$. Among witnesses with
$\chid{2}(G\bx K_3)\le 10$, the parameter pairs that would improve on $11/10$ are, for
$q=\chid{2}(G\bx K_3)$ ranging from $4$ to $10$,
\[
  (q,\chi(G))\in\{(4,5),(5,6),(6,7),(7,8),(8,9),(9,10),(10,12)\}
\]
respectively (for $q=10$ one needs $\chi\ge 12$, as $\chi=11$ only meets the ratio). None is excluded
by the present result; the cases with $\chi\le 4$ are settled by known results~\cite{GKZ}.
\end{remark}

\section{Computational input and certificates}\label{sec:certs}

The proofs above use exactly three computational facts about the explicit data of
Appendix~\ref{app:data}: property~(i) (colour-degree $\le 2$), property~(ii) (non-list-colourability
of $(B,L)$), and the correctness of the displayed colouring $\psi$ of $B-0$
(Appendix~\ref{app:coloring}). Everything else in Lemmas~\ref{lem:chi}--\ref{lem:pi2} is a direct
consequence of the written proofs.

Properties (i) and the correctness of $\psi$ are finite checks over the listed data. Property (ii) is
a negative statement, which we back by an independently checkable certificate rather than by a solver
run alone. Encode list-colourability of $(B,L)$ as the CNF $F$ with a variable $x_{v,c}$ for each
$v\in V(B)$ and $c\in L(v)$, an at-least-one clause $\bigvee_{c\in L(v)} x_{v,c}$ for each vertex, and
a clause $\lnot x_{u,c}\vee\lnot x_{v,c}$ for each edge $uv\in E(B)$ and colour $c\in L(u)\cap L(v)$;
then $F$ has $90$ variables and $120$ clauses.

\begin{lemma}\label{lem:cnf}
$F$ is satisfiable if and only if $(B,L)$ is list-colourable.
\end{lemma}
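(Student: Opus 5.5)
The plan is to prove both directions directly, translating between assignments and colourings. Note first that the clauses of $F$ are exactly the at-least-one clauses (one per vertex, $30$ in total) and the conflict clauses (one per edge $uv$ and common colour $c\in L(u)\cap L(v)$). There are no at-most-one clauses. So a satisfying assignment may set several $x_{v,c}$ true at one vertex, and the only real care needed is in choosing a single colour per vertex.

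\emph{($\Leftarrow$)} Given a proper list-colouring $\psi$ of $(B,L)$, I will set $x_{v,c}$ true iff $c=\psi(v)$. Each at-least-one clause at $v$ is satisfied because $\psi(v)\in L(v)$, so $x_{v,\psi(v)}$ is a variable of $F$ and is true. Consider a conflict clause $\lnot x_{u,c}\vee\lnot x_{v,c}$ with $uv\in E(B)$. If it failed, we would have $\psi(u)=c=\psi(v)$, contradicting properness.

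\emph{($\Rightarrow$)} Given a satisfying assignment, I will define $\psi(v)$ to be any $c\in L(v)$ with $x_{v,c}$ true; for definiteness, the least such $c$. Such a $c$ exists by the at-least-one clause at $v$, and $\psi(v)\in L(v)$ by construction. For properness, take an edge $uv$ and suppose $\psi(u)=\psi(v)=c$. Then $c\in L(u)\cap L(v)$, so the clause $\lnot x_{u,c}\vee\lnot x_{v,c}$ belongs to $F$. But both literals are false, which is a contradiction.

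The only subtle point is this absence of at-most-one clauses, and the selection rule in ($\Rightarrow$) handles it: conflict clauses forbid a shared true colour on an edge, so any choice of true colours gives a proper colouring. The counts are a side remark rather than part of the equivalence. There are $90=30\cdot 3$ variables, and $120-30=90$ conflict clauses, which is $\sum_{uv\in E(B)}|L(u)\cap L(v)|$ over the $72$ edges. That sum is the kind of finite check that Section~\ref{sec:certs} delegates to the verifier.
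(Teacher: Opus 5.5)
Your proof is correct and is the same argument as the paper's: a colouring gives a satisfying assignment by setting its chosen variables true, and conversely picking one true variable per vertex gives a proper list-colouring, with the edge clauses doing all the work and no at-most-one clauses needed. Your side count $\sum_{uv\in E(B)}|L(u)\cap L(v)|=90$ also matches the appendix data: all $72$ edges share at least one colour, and $18$ of them share two.
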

\begin{proof}
An $L$-colouring yields a satisfying assignment by setting its chosen variables to true. Conversely,
in any satisfying assignment each vertex has at least one true variable; selecting one such variable
per vertex gives an $L$-colouring, since the edge clauses prevent adjacent vertices from selecting the
same colour. (No at-most-one clauses are needed.)
\end{proof}

By Lemma~\ref{lem:cnf}, property (ii) is equivalent to $F$ being unsatisfiable. A \textsc{drat}
refutation of $F$ was produced and subsequently checked by the independent proof checker
\texttt{drat-trim}, which reports \texttt{VERIFIED} (a \textsc{drup} core: $0$ RAT lemmas, so the
refutation is checkable by unit propagation alone). The deposited files and their SHA-256 digests are:
\begin{center}\small
\begin{tabular}{ll}
\texttt{BL\_data.txt} & \texttt{8cf437f07fd0030295e8e6926579ecddb8507dde3660c61e9dc3017b551b9d86}\\
\texttt{BL\_listcol.cnf} & \texttt{61cf81892288925412c829dea1820631e1dd96dc9fd0f64bbdd647d8b182ca50}\\
\texttt{BL\_listcol.drat} & \texttt{ae99e0287eb828a62f098706f8f66308e9c71adbb29baf74248fe55e9db3f55f}\\
\texttt{verify\_BL.py} & \texttt{da135ac014d58c5ee3639e8ebd7cefe525c9beedcc9f6b05087167b287355a54}\\
\texttt{BL\_original\_lists.txt} & \texttt{6060724e1d205c1ec00b7231bfba406ea103b52028412f328591385f2de6b5cf}\\
\texttt{fusion\_map.txt} & \texttt{84cee440d374fbb688bc8574676dd88edb36b63be4dce2fe2eb96cfcf295a23f}\\
\end{tabular}
\end{center}
Here \texttt{BL\_data.txt} is the machine-readable form of $(B,L)$; the CNF header of
\texttt{BL\_listcol.cnf} carries the variable map $x_{v,c}\mapsto(v,c)$. The script
\texttt{verify\_BL.py} (Python~3.6+, standard library only) performs, from the deposited files alone:
structural integrity of $(B,L)$ ($30$ vertices; all lists of size $3$ in $\{0,\dots,9\}$; $72$
distinct loopless edges); property (i); the exact clause-for-clause match between $F$ and $(B,L)$;
correctness of $\psi$; the reconstruction $L=f\circ L_0$ from the provenance files; and that the
above digests match the distributed files. Non-list-colourability is then confirmed by running
\texttt{drat-trim BL\_listcol.cnf BL\_listcol.drat}.

\paragraph{Data availability.} The certificate and verification files listed above are included as
ancillary files with the arXiv submission and will be deposited in a permanent public repository (with
a DOI) upon publication.

\appendix
\section{The instance $(B,L)$}\label{app:data}

\paragraph{Lists.} For each $v\in\{0,\dots,29\}$, $L(v)\subseteq\{0,\dots,9\}$:
\begin{center}\small
\begin{tabular}{lll}
$0\colon \{3,4,7\}$ & $1\colon \{3,5,8\}$ & $2\colon \{4,5,9\}$ \\
$3\colon \{5,7,8\}$ & $4\colon \{4,7,8\}$ & $5\colon \{0,1,3\}$ \\
$6\colon \{2,3,6\}$ & $7\colon \{0,6,7\}$ & $8\colon \{1,2,6\}$ \\
$9\colon \{0,1,2\}$ & $10\colon \{3,5,7\}$ & $11\colon \{3,4,6\}$ \\
$12\colon \{6,7,9\}$ & $13\colon \{4,5,7\}$ & $14\colon \{4,5,6\}$ \\
$15\colon \{0,1,3\}$ & $16\colon \{2,3,8\}$ & $17\colon \{1,8,9\}$ \\
$18\colon \{0,2,8\}$ & $19\colon \{0,1,2\}$ & $20\colon \{3,4,5\}$ \\
$21\colon \{3,6,8\}$ & $22\colon \{5,7,8\}$ & $23\colon \{4,6,8\}$ \\
$24\colon \{4,5,6\}$ & $25\colon \{0,1,3\}$ & $26\colon \{2,3,9\}$ \\
$27\colon \{1,7,9\}$ & $28\colon \{0,2,9\}$ & $29\colon \{0,1,2\}$
\end{tabular}
\end{center}

\paragraph{Edges of $B$ ($72$).} Each pair $u\,w$ denotes an edge $uw$:
\begin{center}\small
\begin{minipage}{0.92\textwidth}
\raggedright 0\,2, 0\,3, 0\,4, 0\,5, 0\,6, 1\,2, 1\,3, 1\,4, 1\,5, 1\,6, 2\,3, 2\,4, 2\,12, 2\,17, 3\,4, 5\,7, 5\,8, 5\,9, 6\,7, 6\,8, 6\,9, 7\,8, 7\,9, 7\,22, 7\,27, 8\,9, 10\,12, 10\,13, 10\,14, 10\,15, 10\,16, 11\,12, 11\,13, 11\,14, 11\,15, 11\,16, 12\,13, 12\,14, 12\,17, 13\,14, 15\,17, 15\,18, 15\,19, 16\,17, 16\,18, 16\,19, 17\,18, 17\,19, 18\,19, 20\,22, 20\,23, 20\,24, 20\,25, 20\,26, 21\,22, 21\,23, 21\,24, 21\,25, 21\,26, 22\,23, 22\,24, 22\,27, 23\,24, 25\,27, 25\,28, 25\,29, 26\,27, 26\,28, 26\,29, 27\,28, 27\,29, 28\,29
\end{minipage}
\end{center}

\paragraph{The lift $W$.} $V(W)=\{0,\dots,29\}\cup\{z_0,\dots,z_9\}$; edges: $E(B)$ above,
all pairs $z_cz_{c'}$, and $v z_c$ whenever $c\notin L(v)$.

\section{An $L$-colouring of $B-0$}\label{app:coloring}
The following $\psi$ certifies $\chi(W)\le 11$ (Lemma~\ref{lem:chi}); each $\psi(v)\in L(v)$ and
$\psi$ is proper on $B-0$.
\begin{center}\small
\begin{tabular}{c|ccccccccccccccc}
$v$ & 1&2&3&4&5&6&7&8&9&10&11&12&13&14&15\\
\hline
$\psi(v)$ & 8&4&5&7&3&3&0&2&1&3&3&7&5&4&0
\end{tabular}

\smallskip
\begin{tabular}{c|cccccccccccccc}
$v$ & 16&17&18&19&20&21&22&23&24&25&26&27&28&29\\
\hline
$\psi(v)$ & 8&9&2&1&3&3&5&8&4&0&9&7&2&1
\end{tabular}
\end{center}

\end{document}